\newtheorem{theorem}{Theorem}[section]
\newtheorem{lemma}[theorem]{Lemma}
\title{The Erd\H os-S\'os Conjecture for Geometric Graphs}
\author{
  Luis F.~Barba \thanks{Instituto de Matem\'aticas, UNAM}
  \and
  Ruy Fabila-Monroy \thanks{Departamento de Matem\'aticas, CINVESTAV.} \footnote{Partially supported by CONACYT of
Mexico, Grant 153984.}
  \and
  Dolores Lara \thanks{Departament de Matem\`atica Aplicada II, Universitat Polit\`ecnica de Catalunya (UPC).} 
  \footnotemark[3]
  \and
  Jes\'us Lea\~nos\thanks{Escuela de Matem\'aticas, UAZ.}
  \and
  Cynthia Rodr\'iguez \thanks{Departamento de Sistemas, UAM-Azcapotzalco.}
  \and
  Gelasio Salazar \thanks{Instituto de F\'isica, UASLP. Partially supported by CONACYT of Mexico, Grant 106432.}
  \and
  Francisco Zaragoza \footnotemark[5]
}
\begin{document}

\maketitle

\begin{abstract}
Let $f(n,k)$ be the minimum number of edges
that must be removed from some complete geometric
graph $G$ on $n$ points, so that there
exists a tree on $k$ vertices that is no 
longer a planar subgraph of $G$.
In this paper we show that 
$\left( \frac{1}{2} \right )\frac{n^2}{k-1}-\frac{n}{2}\le f(n,k) \le 2 \frac{n(n-2)}{k-2}$.
For the case when $k=n$, we show that
$2 \le f(n,n) \le 3$. For the case when
$k=n$ and $G$ is a geometric graph on a set of points
in convex position, we show that at least
three edges must be removed.
\end{abstract}

\section{Introduction}

One of the most notorious problems in extremal graph theory is the
Erd\H{os}-S\'os Conjecture, which states that every simple graph with
average degree greater than $k - 2$ contains every tree on $k$
vertices as a subgraph. This conjecture was recently proved true for
all sufficiently large $k$ (unpublished work of Ajtai, Koml\'os,
Simonovits, and Szemer\'edi).

In this paper we investigate a variation of this conjecture in the
setting of geometric graphs. Recall that a {\em geometric graph}
$G$ consists of a set $S$ of points in the plane (these are the
vertices of $G$), plus a set of straight line segments, each of which joins
two points in $S$ (these are the edges of $G$).  In particular, any
set $S$ of points in the plane in general position naturally induces a
{complete geometric graph}. For brevity, we often refer to the edges
of this graph simply as edges {\em of} $S$. 
If $S$ is in convex position then $G$ is a
\emph{convex geometric graph}.
A geometric graph is {\em planar} if no two of its edges cross each
other. An \emph{embedding} of an abstract graph $H$ into a geometric
graph $G$ is an isomorphism from $H$ to a planar geometric
subgraph of $G$.
For $r\ge 0$, an \emph{$r$-edge} is an edge of $G$ such that
in one of the two open semi-planes defined
by the line containing it, there are exactly $r$ points of $G$.

In this paper all point sets are in general position and $G$ is a complete geometric graph on $n$ points.
It is well known that for every integer $1 \le k \le n$, $G$ contains every tree on $k$ vertices
as a planar subgraph \cite{optimal}.
Even more, it is possible to embed any such tree into $G$, when the image
of a given vertex is prespecified \cite{rooted}.

Let $F$ be a subset of edges of $G$, which we call \emph{forbidden edges}.
 If $T$ is a
tree for which every embedding into $G$
uses an edge of $F$, then we say that $F$ \emph{forbids} $T$.
 In this paper  we study the question of what is
the minimum size of $F$ so that there is a tree on $k$ vertices that 
is forbidden by $F$.  Let $f(n,k)$ be the minimum of this number
taken over all complete geometric graphs on
$n$ points. 
As $f(2,2)=1$, $f(3,3)=2$, $f(4,4)=2$ and $f(n,2)=\binom{n}{2}$, we assume through out
the paper that $n\ge 5$
and $k\ge 3$.

We show the following bounds
on $f(n,k)$.

\begin{theorem} \label{thm:main}
$$\left( \frac{1}{2} \right )\frac{n^2}{k-1}-\frac{n}{2}\le f(n,k) \le 2 \frac{n(n-2)}{k-2}$$
\end{theorem}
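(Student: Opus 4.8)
I would prove the two inequalities separately.

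\emph{Lower bound.} The plan is to regard the forbidden set $F$ as the edge set of an auxiliary graph $H$ on the $n$ points of $G$, and to extract from $H$ a large independent set. By the Caro--Wei bound (equivalently, by a greedy argument) $\alpha(H)\ge\sum_{v}\frac1{d_H(v)+1}$, and by Cauchy--Schwarz this is at least $n^2/\sum_v(d_H(v)+1)=n^2/(2|F|+n)$. Hence if $|F|<\tfrac12\tfrac{n^2}{k-1}-\tfrac n2$, then $2|F|+n<\tfrac{n^2}{k-1}$, so $\alpha(H)>k-1$, and since $\alpha(H)$ is an integer we get $\alpha(H)\ge k$. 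Choosing $k$ points of $G$ that span no edge of $F$, the complete geometric graph they induce lies inside $G\setminus F$ and, by \cite{optimal}, admits an embedding of every tree on $k$ vertices. Thus no $k$-vertex tree is forbidden by $F$, which gives the bound. I expect no difficulty here; the only point that is genuinely used is the integrality of $\alpha(H)$, which is what upgrades the ``obvious'' denominator $k$ to $k-1$.

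\emph{Upper bound.} Here the plan is an explicit construction. Take $G$ to be $n$ points in convex position $v_0,\dots,v_{n-1}$ (in convex order), and let $T$ be the spider consisting of a centre $c$, $\ell:=\lfloor (k-1)/2\rfloor$ legs of length $2$, and (if $k$ is even) one extra pendant edge at $c$, so that $|V(T)|=k$. Let $d$ be the least integer with $d+2>2(n-2)/(k-2)$ and let $F$ be the set of all $r$-edges of $G$ with $r<d$, i.e.\ all edges of cyclic distance at most $d$; for each such distance there are exactly $n$ edges (one checks $d<n/2$, which is the only interesting range, since for the smaller values of $k$ the claimed bound already exceeds $\binom n2$ and one may simply take $F$ to be all edges), so $|F|=nd<2n(n-2)/(k-2)$. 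It remains to check that $F$ forbids $T$. Suppose $T$ is embedded with $c\mapsto q:=v_{j_0}$. Deleting $q$ leaves the other $n-1$ points in an arc; label their positions $1,\dots,n-1$ along it, and for each length-$2$ leg $q-x_i-y_i$ let $I_i$ be the interval of positions spanned by $x_i$ and $y_i$. Using the standard fact that, in a convex point set, the segment from $q$ to position $t$ crosses the chord joining positions $s<u$ exactly when $s<t<u$, one obtains: (i) since $x_iy_i\notin F$ the interval $I_i$ contains at least $d+2$ positions; and (ii) the intervals $I_1,\dots,I_\ell$ are pairwise disjoint, because a nesting $I_j\subsetneq I_i$ would put the position of $x_j$ strictly inside $I_i$, forcing $qx_j$ to cross $x_iy_i$ (a shared endpoint is impossible since distinct vertices occupy distinct positions), and a pair of interleaving intervals would make $x_iy_i$ and $x_jy_j$ cross. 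If $k$ is even, the position of the extra pendant leaf must likewise avoid every $I_i$ by the same crossing rule. Hence $\ell(d+2)\le n-1$ (respectively $\le n-2$ when $k$ is even), contradicting the choice of $d$.

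\emph{Where the work is.} The lower bound is routine. The crux of the upper bound is step (ii): converting ``no two legs of the spider cross'' into ``the position-intervals of the legs are pairwise disjoint''. This is where convex position and the spider structure are really used; once it is in place, what remains is only the bookkeeping needed to make the constant come out as $2n(n-2)/(k-2)$ in both parities of $k$, together with the trivial observation that $f(n,k)$ is non-increasing in $k$ (attach $k'-k$ leaves to a forbidden tree), which is convenient although here it is cleanest to run the construction for every $k$ directly.
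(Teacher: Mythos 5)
Your proof is correct, and it is worth separating the two halves when comparing it with the paper. For the lower bound you do essentially what the paper does: the paper notes that $G\setminus F$ must be $K_k$-free (since a complete geometric graph on any $k$ of the points contains every $k$-vertex tree as a planar subgraph) and applies Tur\'an's theorem, whereas you apply Caro--Wei to the graph of forbidden edges to extract $k$ points spanning no edge of $F$; these are two phrasings of the same Tur\'an-type argument and yield the identical bound. For the upper bound your construction coincides with the paper's (points in convex position, forbid all $r$-edges for $r$ below roughly $2(n-2)/(k-2)$, target tree a spider with legs of length two), but your verification that the spider is forbidden is genuinely different. The paper reduces to two local configuration lemmas (three consecutive forbidden hull edges, or three pairs of consecutive forbidden hull edges, each of which blocks the spider) and then argues that every $k$-point subset must contain one of these configurations via a count of the long edges separating consecutive chosen points. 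You instead argue globally and directly: fixing the image $q$ of the centre, each length-two leg occupies an interval of at least $d+2$ positions on the arc complementary to $q$, and planarity forces these intervals (plus the position of the optional pendant leaf) to be pairwise disjoint, so $\ell(d+2)\le n-1$; since $\ell(d+2)>(n-2)\bigl(1+\tfrac{1}{k-2}\bigr)\ge n-1$ by the choice of $d$ and $n\ge k$, this is a contradiction. I checked the arithmetic in both parities of $k$ and the degenerate range where the claimed bound exceeds $\binom{n}{2}$ (which you correctly dispose of by taking $F$ to be all edges); everything closes. Your interval-disjointness argument is more self-contained and avoids the per-subset case analysis of the paper, at the cost of being tied to this specific spider; the paper's configuration lemmas are reused elsewhere (Lemma~\ref{lem:conf3} also gives $f(n,n)\le 3$), which is presumably why the authors factored the argument that way.
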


\begin{theorem}\label{thm:k=n}

$$2 \le f(n,n) \le 3$$
\end{theorem}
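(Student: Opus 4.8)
The two inequalities are proved independently. The lower bound $f(n,n)\ge 2$ is equivalent to the statement that in any complete geometric graph $G$ on a point set $S$, every spanning tree $T$ admits a planar embedding avoiding any one prescribed edge $e=uv$. The upper bound $f(n,n)\le 3$ asks for an explicit complete geometric graph $G$, a three–edge set $F$, and a spanning tree $T$ such that deleting the edges of $F$ from $G$ leaves no planar spanning subtree isomorphic to $T$.

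\textbf{Lower bound.} Fix $G$ on $S$, the edge $e=uv$, and a spanning tree $T$; the basic tool is the rooted embedding theorem (embed any tree with the image of one prescribed vertex fixed). First I would dispose of the case that an endpoint of $e$, say $u$, is a vertex of $\mathrm{conv}(S)$: then $u$ has two distinct hull–neighbours, so we may pick one of them, $y\neq v$; take a leaf $\ell$ of $T$ with neighbour $\ell'$, embed $T-\ell$ into $S\setminus\{u\}$ with $\ell'\mapsto y$, and attach $\ell$ at $u$ via the hull edge $uy$, which crosses nothing. The only image–edge at $u$ is then $uy\neq uv$, so $e$ is avoided; the same works if $v$ is a hull vertex. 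This reduces matters to $u,v$ both interior. Now take a hull vertex $p$ (automatically $p\notin\{u,v\}$), write $S\setminus\{p\}=(q_1,\dots,q_{n-1})$ in angular order around $p$ with $u=q_i$, $v=q_j$, $i<j$, choose a vertex $r$ of $T$ adjacent to a leaf $z$, map $r\mapsto p$, and use the recursive construction behind the rooted theorem: it assigns the components of $T-r$ (one of which is the single point $\{z\}$) to consecutive angular blocks in an order we may choose. If $j>i+1$, put the block $\{z\}$ at a position $k$ with $i<k<j$, so that $q_i$ and $q_j$ fall in different blocks, hence different subtrees, and $uv$ is not used; if $j=i+1$, instead force the block $\{z\}$ to be $\{q_i\}=\{u\}$, so $\phi(z)=u$ and the only image–edge at $u$ is $up\neq uv$. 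In both cases one must still realise the remaining blocks (subtrees of size $>1$) recursively inside their blocks. The main obstacle is exactly this last step, together with the case $j=i+1$: the required block pattern needs a suitable subset of the component sizes of $T-\{z\}$ to sum to the right value, which can fail for a poor choice of $r$; making the argument go through needs a careful choice of $r$ (using several leaf–neighbours, or reverting to the hull–vertex move when the components are too coarse) and an induction on $|S|$ in which the prescribed–image constraint on $\ell'$ is carried along and the hypothesis $p\notin\{u,v\}$ is re‑established (e.g.\ if deleting $p$ exposes $u$ or $v$ on the new hull, the first case applies again).

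\textbf{Upper bound.} For $5\le n\le 6$ this is immediate and purely combinatorial: take $T=K_{1,n-1}$ and let $F$ consist of three edges covering all $n$ vertices; since any embedding of a star is the set of all edges at its centre, it meets $F$. For $n\ge 7$ three edges can no longer cover all vertices, and since $K_n$ minus three edges contains every $n$–vertex tree as an abstract subgraph, the obstruction must be genuinely geometric. The plan is to take a point set whose convex hull is as small as possible — a triangle $q_1q_2q_3$, with all remaining points in its interior — together with a spanning tree $T$ whose shape forces the ``outer'' part of any planar embedding to touch the hull, so that every planar spanning copy of $T$ uses one of the three hull edges; then $F=\{q_1q_2,q_2q_3,q_1q_3\}$. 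Equivalently, one must show that $G$ with these three edges deleted still has no planar spanning copy of $T$, although $G$ does. This verification is the technical heart of the theorem: one identifies which image–edges are forced present, tracks which points can be joined to the interior without crossing those edges, and rules out, case by case, every alternative routing of $T$, each of which is shown to create a crossing or to use a hull edge. The delicate point is choosing $T$ and the interior configuration so that this case analysis actually closes; a natural family to try is the trees consisting of a single high–degree vertex with one short pendant path, since on a small–hull point set the placement of such a tree is especially rigid.
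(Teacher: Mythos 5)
Your two-part plan matches the shape of the paper's argument, and your reduction of the lower bound to the case where both endpoints of $e=uv$ are interior (attach a leaf of $T$ at a hull endpoint of $e$ along a hull edge, which no other segment of $S$ can cross, and embed the rest in $S\setminus\{u\}$ by the rooted embedding theorem) is a correct and clean step that the paper does not use. But both halves stop short of a proof. For the lower bound, the step you yourself flag is exactly where the real work lies: when $u,v$ are interior and the component sizes of $T-r$ do not allow the singleton block to land between them, you must recurse into a larger block, and that recursion needs to embed a subtree with a \emph{prescribed} root image while still avoiding $e$ --- a statement that is false in general (a star whose centre is forced onto an endpoint of $e$ necessarily uses every edge at that point, including $e$), so the induction hypothesis cannot simply be ``rooted embedding avoiding one edge.'' The paper closes this by fixing a specific embedding algorithm (children ordered by increasing subtree size, rightmost point chosen as each sub-root) and running a case analysis on the local structure of $T$ at the offending edge; the genuinely hard case --- $v$ a leaf with no siblings and all of $u$'s siblings carrying single-edge subtrees, i.e.\ $T$ locally a spider $T_n$ --- is settled by an ad hoc even/odd re-embedding around the grandparent's image. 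Nothing in your sketch substitutes for that case.

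For the upper bound your matching construction is fine for $n\le 6$, but for $n\ge 7$ you have no construction: you propose a triangle hull with interior points and an unspecified tree, and explicitly defer ``the technical heart'' of checking that some tree is forbidden. It is not even clear that such a tree exists for that configuration (a star centred at an interior point avoids all three hull edges outright, and controlling planarity of spanning trees when interior points are present is much harder than in convex position). The paper's construction is different and short to verify: take $S$ in convex position, forbid three \emph{consecutive} hull edges $(p_1,p_2),(p_2,p_3),(p_3,p_4)$, and show (Lemma~\ref{lem:conf3}) that the spider $T_n$ is forbidden, because each leaf edge of $T_n$ must be embedded into a hull edge, so neither a leaf nor a leaf-neighbour can go to $p_2$ or $p_3$; this forces the centre onto $p_2$ or $p_3$ and hence forces a forbidden edge. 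To complete your proof you would need either to carry out the triangle-hull verification in full for an explicit tree, or to replace it with a concrete construction of this convex type.
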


In the case when $G$ is a convex complete geometric graph, 
we show that the minimum number of edges needed to forbid 
a tree on $n$ vertices is three. 

An equivalent formulation of the problem studied
in this paper is to
ask how many edges must be removed from $G$ so that
it no longer contains \emph{some} planar subtree on $k$ vertices.
A related problem is to ask how many edges must be
removed from $G$ so that it no longer contains
\emph{any} planar subtree on $k$ vertices.
For the case of $k=n$, in \cite{ramseytype}, 
it is proved that if any $n-2$ edges are removed from $G$, 
it still contains a planar spanning subtree. Note that
if the $n-1$ edges incident to any vertex of $G$ 
are removed, then $G$ no longer contains a spanning subtree.
 In general,
for $2\le k \le n-1$, in \cite{edge-removal}, it
is proved that if any set of 
$\left \lceil \frac{n(n-k+1)}{2} \right \rceil-1$ edges
are removed from $G$, it still contains
a planar subtree on $k$ vertices. In the same
paper it is also shown that this bound is tight---a
 geometric graph on $n$ vertices and a subset
of $\left \lceil \frac{n(n-k+1)}{2} \right \rceil$
of its edges are shown, so that when these edges
are removed, every planar
subtree has at most $k-1$ vertices. In \cite{packing} the authors study the 
similingly unrelated problem of packing
two trees into planar graphs. That is, given two trees
on $n$ vertices, the authors consider the question
of when it is possible to find a planar graph
having both of them as spanning trees and in which
the trees are edge disjoint. However, although theirs is a combinatorial 
question rather than geometric, their Theorem 2.1 implies our Lemma~\ref{lem:few_0edges}. We provide a self
contained proof of Lemma~\ref{lem:few_0edges} for completeness.

A previous version of this paper appeared in the conference
proceedings of EUROCG'12~\cite{eurocg}.

\section{Spanning Trees}
In this section we consider the case when $k=n$.
Let $T$ be a tree on $n$ vertices. Consider
the following algorithm to embed $T$ into
$G$. Choose a vertex $v$ of $T$ and root
$T$ at $v$. For every vertex of $T$
choose an arbitrary order of its children.
Suppose that the neighbors of $v$
are $u_1,\dots,u_m$, and let 
$n_1,\dots,n_m$ be the number of nodes
in their corresponding subtrees.
Choose a convex hull point $p$ of $G$ and
embed $v$ into $p$. Sort
the remaining points of $G$ counter-clockwise by angle
around $p$. Choose $m+1$ rays centered
at $p$ so that the wedge between two consecutive
rays is convex and between the $i$-th ray
and the $(i+1)$-th ray there are exactly $n_i$ points of $G$.
Let $S_i$ be this set of points. For each
$u_i$ choose a convex hull vertex of $S_i$ visible
from $p$ and embed $u_i$ into this point. Recursively
embed the subtrees rooted at each $u_i$ into $S_i$.
Note that this algorithm provides an embedding of $T$
into $G$. We will use this embedding frequently throughout the paper.
See Figure~\ref{fig:embedding}.

For every integer $n\ge 2$ we define
a tree $T_n$ as follows:
If $n=2$, then $T_n$ consists of only one edge;
if $n$ is odd, then $T_n$ is constructed
by subdividing once every edge of a star
on $\frac{n-1}{2}$ vertices;
if $n$ is even and greater than $2$, then $T_n$ is constructed
by subdividing an edge of $T_{n-1}$.
These trees are particular cases of \emph{spider trees}.
See Figure~\ref{fig:matatenas}.

\begin{figure}
  \begin{center}
  \includegraphics[width=0.8\textwidth]{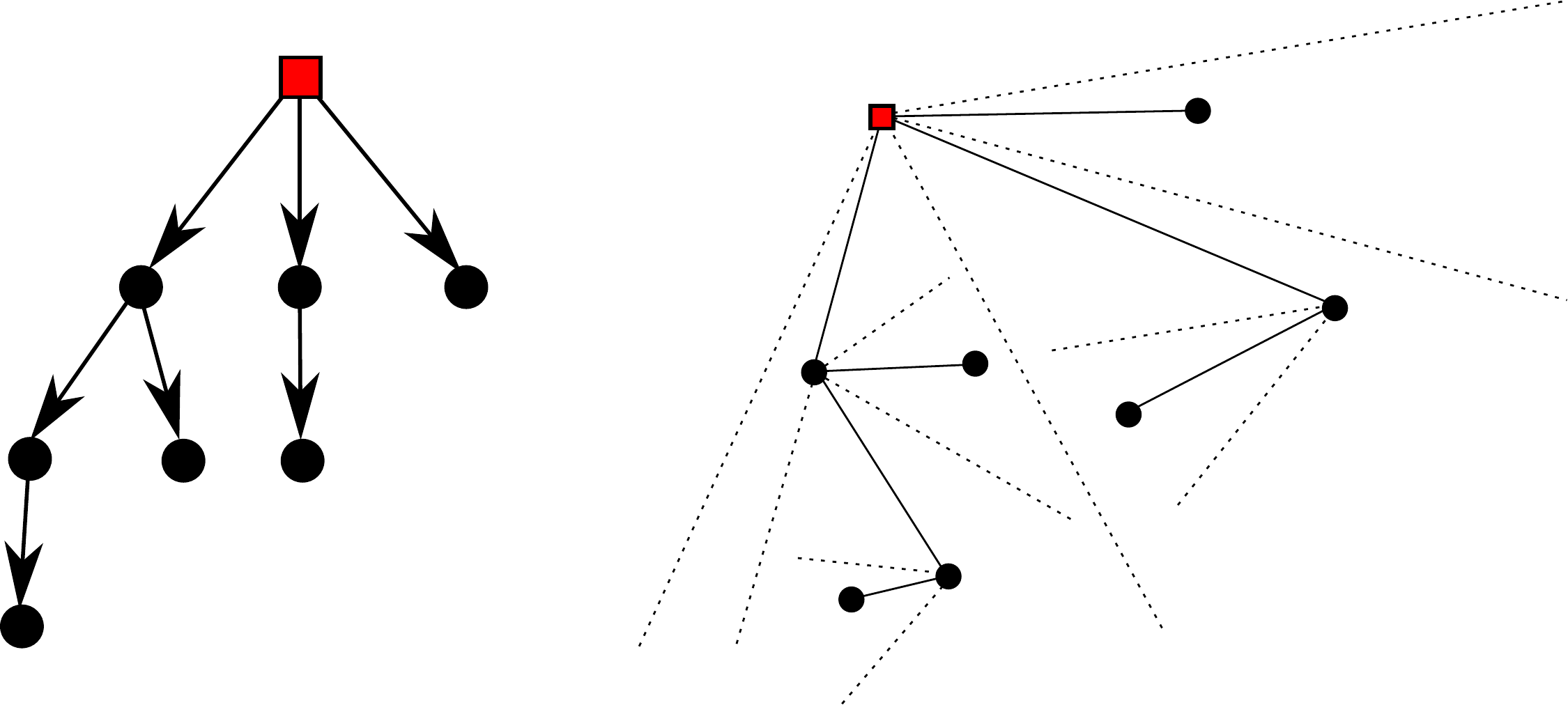}
  \caption{An embedding of a tree using the algorithm.}\label{fig:embedding}
\end{center}
\end{figure}

\begin{figure}
\begin{center}
  \includegraphics[width=0.8\textwidth]{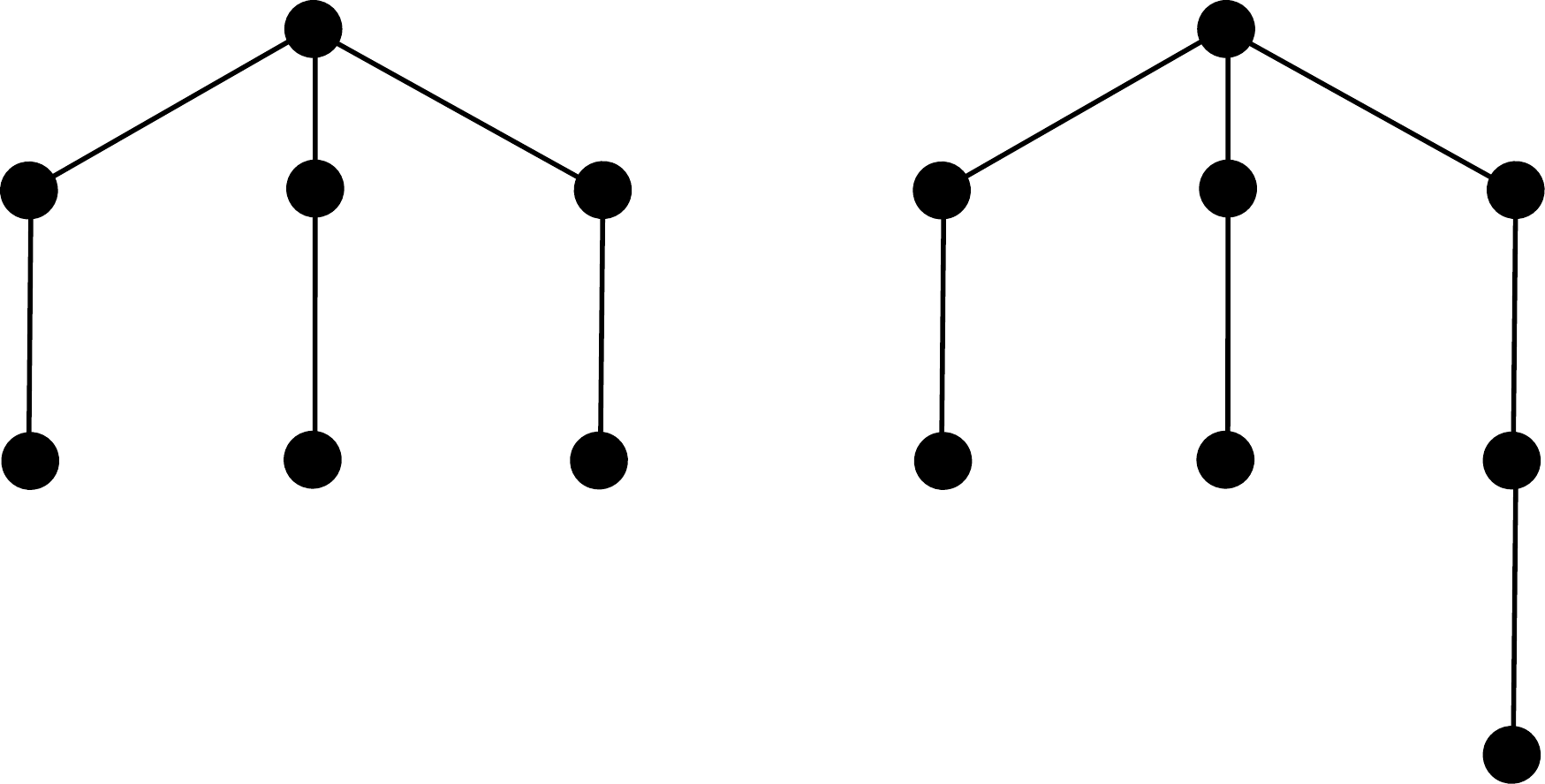}
  \caption{$T_7$ and $T_8$.}\label{fig:matatenas}
\end{center}
\end{figure}

We prove the lower bound of $f(n,n)\ge 2$ of Theorem~\ref{thm:k=n}.

\begin{theorem}
If $G$ has only one forbidden edge, then
any tree on $n$ vertices can be embedded into 
$G$, without using the forbidden edge.
\end{theorem}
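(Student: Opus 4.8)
The plan is to construct, for an arbitrary forbidden edge $e=ab$ and an arbitrary tree $T$ on $n$ vertices, an embedding of $T$ into $G$ that does not use $ab$. The guiding idea: if one endpoint of $ab$, say $a$, is the image of a leaf $\ell$ of $T$, and the neighbour $w$ of $\ell$ in $T$ is not mapped to $b$, then the only edge of the embedded tree incident with $a$ is the image of $\ell w$, which differs from $ab$; hence $ab$ is not used. So it suffices to embed $T-\ell$ into $G\setminus\{a\}$ with $w$ placed on some point $c\neq b$ such that the segment $ca$ crosses no edge of the embedded $T-\ell$, and then to set $\ell\mapsto a$.

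First I would dispose of the case in which one of $a,b$ lies on the convex hull of $G$; say $a$ does. Let $\ell$ be a leaf of $T$ with neighbour $w$, and root $T-\ell$ at $w$. Since $a$ is a hull vertex of $G$ it lies outside $\mathrm{conv}(G\setminus\{a\})$, so it sees an arc of the boundary of $\mathrm{conv}(G\setminus\{a\})$ containing at least two of its vertices, at most one of which can be $b$; choose such a visible hull vertex $c\neq b$. Using the embedding algorithm described above, embed $T-\ell$ into $G\setminus\{a\}$ with its root $w$ placed at $c$. Because $c$ is a hull vertex of $G\setminus\{a\}$ visible from $a$, the segment $ca$ meets $\mathrm{conv}(G\setminus\{a\})$ only at $c$ and hence crosses no edge of the embedded $T-\ell$; adding $\ell\mapsto a$ produces a planar embedding of $T$, and $ca\neq ab$ since $c\neq b$, so $ab$ is unused. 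If $b$ rather than $a$ lies on the hull, the roles of $a$ and $b$ are exchanged.

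What remains, and where the real difficulty lies, is the case in which both $a$ and $b$ are interior to $\mathrm{conv}(G)$; I would treat this by induction on $n$, the cases $n=3,4$ being covered by $f(3,3)=f(4,4)=2$. Fix a hull vertex $p$ of $G$, so that $p\notin\{a,b\}$; pick a leaf $\ell$ of $T$; by the inductive hypothesis embed $T-\ell$ into $G\setminus\{p\}$ avoiding the (still present) forbidden edge $ab$; then reglue $\ell\mapsto p$. The new edge is incident with $p\notin\{a,b\}$, hence different from $ab$, so the regluing does not create the forbidden edge. The obstacle, and the heart of the argument, is the planarity of this last step: the neighbour $w$ of $\ell$ must be mapped to a point from which $p$ is visible through the already-embedded $T-\ell$, and the bare inductive hypothesis gives no control over the image of $w$. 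I would resolve this either by strengthening the inductive statement so that the recursively produced embedding can be required to send a prescribed vertex of the tree to a prescribed hull vertex visible from $p$ (taking that hull vertex to be distinct from $a$ and $b$), or, alternatively, by running the embedding algorithm rooted at $p$ directly and choosing the root of $T$, the orderings of children, and the hull vertex selected at each recursive step so that $a$ and $b$ are driven into different wedges before either of them becomes the root of a subtree in which the other would be an adjacent vertex. Controlling this last ``forced adjacency'' situation---which already arises for small spider trees such as $T_7$---is the crux.
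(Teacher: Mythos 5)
Your reduction for the case in which an endpoint of the forbidden edge $ab$ lies on the convex hull of $G$ is correct and pleasantly self-contained: once $a$ is deleted, the edge $ab$ disappears from the ambient graph, any planar embedding of $T-\ell$ into $G\setminus\{a\}$ rooted at a visible hull vertex $c\neq b$ will do, and regluing $\ell\mapsto a$ along $ca$ preserves planarity because the open segment $ca$ stays outside $\mathrm{conv}(G\setminus\{a\})$. The problem is that this is the easy half. For the case where both $a$ and $b$ are interior --- which is where the theorem actually lives --- you do not give a proof; you give two candidate strategies and explicitly concede that the ``forced adjacency'' situation is unresolved. The first strategy (strengthening the induction so that a prescribed vertex lands on a prescribed hull point \emph{while still avoiding the forbidden edge}) is itself a statement at least as strong as the theorem, and it needs care even to state correctly: if the prescribed point is allowed to be $a$ or $b$ it is false already for paths on three points, and even with your restriction to points outside $\{a,b\}$ its truth is not established. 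The second strategy is essentially a restatement of the difficulty. So the proposal has a genuine gap: the central case is identified but not closed.

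For comparison, the paper does not split on whether $ab$ meets the hull. It runs the embedding algorithm with the children of every node sorted by increasing subtree size (always taking the rightmost available point as the next root), and then, supposing the edge $(u,v)$ of $T$ lands on the forbidden edge, performs local surgery according to the structure of $T$ near $(u,v)$: if the subtree at $v$ has two or more nodes one simply re-chooses the hull point for $v$; if $v$ is a leaf one reorders siblings or re-roots a star; and the genuinely hard residual case --- $v$ a leaf with no siblings and all subtrees hanging from the siblings of $u$ being single edges, i.e.\ exactly the spider trees $T_n$ you flag --- is handled by an even/odd relabelling of the points around the image of the grandparent of $u$. That parity argument is the missing ingredient your sketch would need in one form or another.
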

\begin{proof}
Let $e$ be the forbidden edge of $G$.
Let $T$ be a tree on $n$ vertices.
Choose a root for $T$. Sort the children
of each node of $T$, by increasing size of their
corresponding subtree. Embed $T$ into $G$
with the embedding algorithm, choosing
at all times the rightmost point as the root
of the next subtree. Suppose that $e$
is used in this embedding. Let $e:=(p,q)$ so that
$u$ is embedded into $p$ and $v$ is embedded
into $q$ (note that $u$ and $v$ are vertices of $T$).

Suppose that the subtree rooted at $v$ has at least two nodes.
In the algorithm, we embedded this subtree into a set of at least two points. We chose
a convex hull point ($q$), of this set visible from $p$ 
to embed $v$.  In this case we may choose another
convex hull point visible from $p$ to embed $v$ and continue
with the algorithm. Note that $(p,q)$ is no longer
used in the final embedding.

Suppose that $v$ is a leaf, and that $v$ 
has a sibling $v'$  whose subtree has 
at least two nodes. Then we may change the order
of the children of $u$ so that $e$ is no longer
used in the embedding, or if it is, then $v'$ is 
embedded into $q$, but then we proceed as above.

Suppose that $v$ is a leaf, and that all its siblings
are leaves. The subtree rooted at $u$ is 
a star. We choose a point distinct
from $p$ and $q$ in the point set where this
subtree is embedded, and embed $u$ into this
point. Afterwards we join it to the remaining points.
This produces an embedding that avoids $e$.

Assume then, that $v$ is a leaf and that it
has no siblings. We distinguish the following cases:

\begin{enumerate}
    \item {\bf $u$ has no siblings.}
      In this case, the subtree rooted at the parent of $u$ is a path
      of length two. It is always possible to
      embed this subtree without using $e$. See Figure~\ref{fig:path}.

    \item {\bf $u$ has a sibling $u'$ whose subtree is not an edge.}
      We may change the order of the siblings of $u$, with respect
      to  their parent, so that the subtree rooted
      at $u'$ 
      will be embedded into the point set containing $p$ and $q$.
      In the initial order---increasing by size of their
      corresponding subtrees---$u'$
      is after $u$. We may assume that in
      the new ordering, the order
      of the siblings of $u$ before it, stays  the same. 
      Therefore $p$ is the rightmost point
      of the set into which the subtree rooted
      at $u'$ will be embedded. Embed $u'$ into $p$.
      Either we find an embedding not using $e$, or this embedding
      falls into one of the cases considered before.

    \item {\bf $u$ has at least one sibling, all whose corresponding subtrees are edges}
      
      Suppose that $u$ has no grandparent; then $T$ is equal to $T_n$ and
      $n$ is odd. Let $w$ be the parent of $u$. Embed $w$
      into $p$. Let $p_1,\dots, p_{n-1}$ be the points
      of $G$ different from $p$ sorted counter-clockwise
      by angle around $p$; choose $p_1$ so that the
      angle between two consecutive points is less
      than $\pi$. Let $u_1,\dots,u_{(n-1)/2}$ be the
      neighbors of $w$. Embed each $u_i$ into $p_{2i-1}$ and
      its child into $p_{2i}$. If $q$ equals $p_{2j-1}$
      for some $j$ then embed $u_j$ into $p_{2j}$
      and its child into $p_{2j-1}$. This embedding
      avoids $e$.

      Suppose that $w$ is the grandparent of $u$ and let $p'$ be the point
      into which  $w$ is embedded. Let $S$ be  the point set
      into which the subtree rooted at the parent of 
      $u$ is embedded. Note that $S$
      has an odd number of points.
      We replace the embedding as follows. Sort $S$ counter-clockwise
      by angle around $p'$. Call a point \emph{even} if it 
      has an even number of points before it in this ordering.
      Call a point \emph{odd} if it 
      has an odd number of points before it in this ordering.
      If $e$ is incident to an odd point, then we embed
      the parent of $u$ into this point.
      The remaining subtree rooted at $u$ can
      be embedded without using $e$. If the endpoints
      of $e$ are both even, between them there is an odd point.
      We embed the parent of $u$ into this point. 
      The remaining vertices can
      be embedded without using $e$ (see Figure~\ref{fig:evenodd}).
      
\end{enumerate}
\end{proof}

\begin{figure}
\begin{center}
  \includegraphics[width=0.8\textwidth]{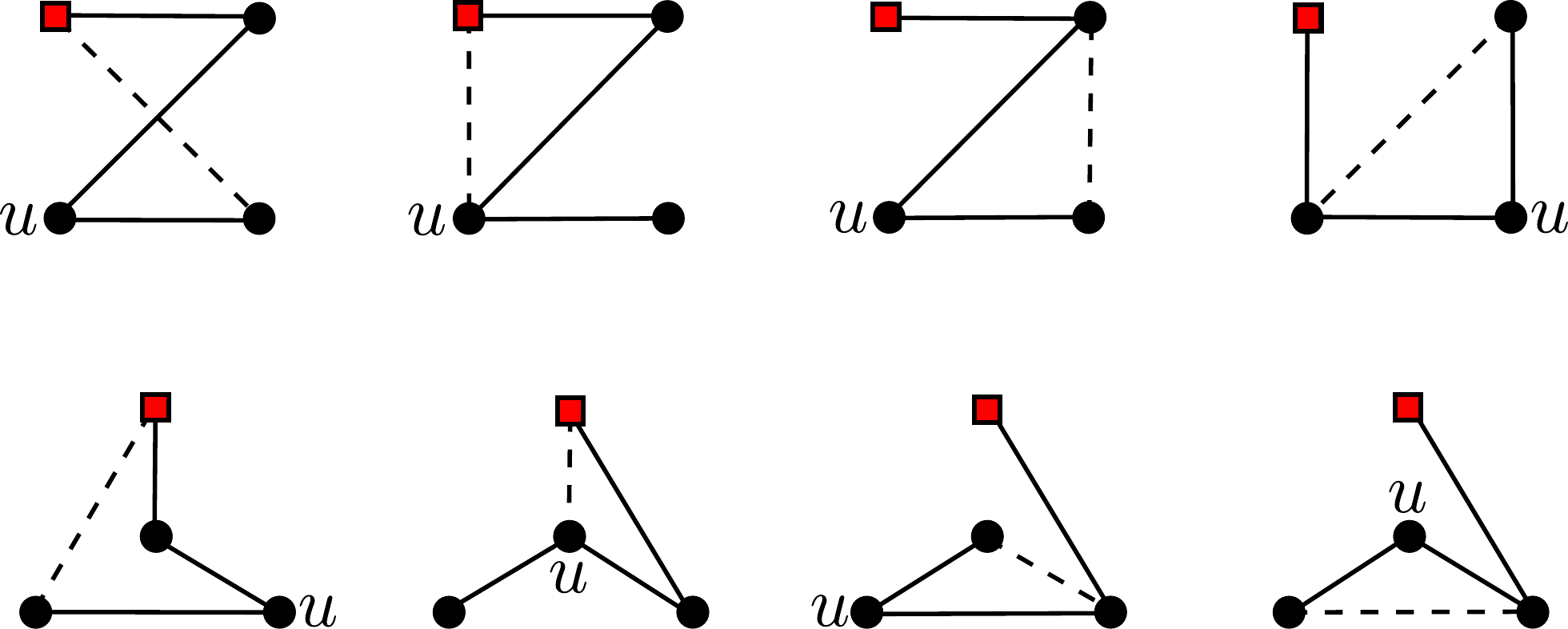}
  \caption{The embedding of a path of length three. The grandparent
    of $u$ is highlighted and the forbidden edge is dashed.}\label{fig:path}
\end{center}
\end{figure}

\begin{figure}
\begin{center}
  \includegraphics[width=0.8\textwidth]{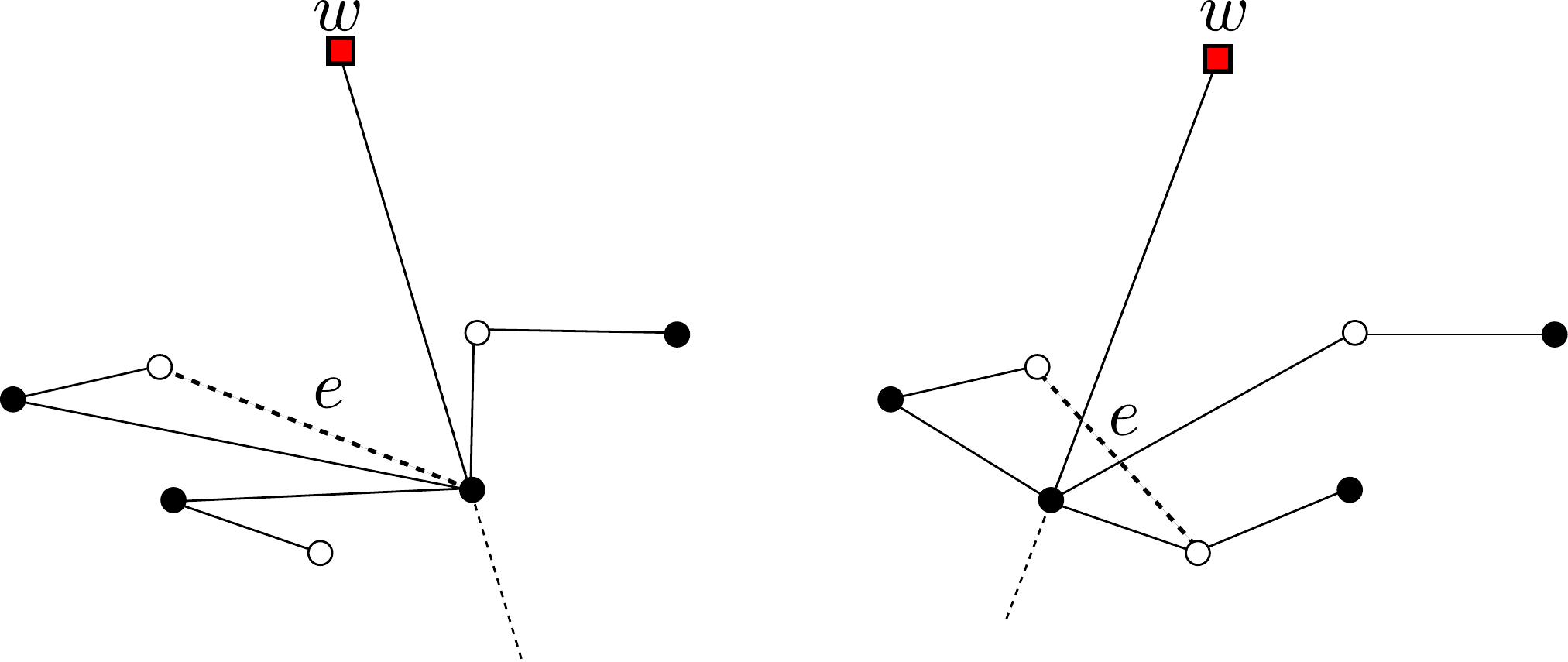}
  \caption{The two sub-cases, when $u$ has a grandparent $w$, and
    all the subtrees of its children are edges. Odd points
  are painted in black and even points in white. The
  forbidden edges are dashed.} \label{fig:evenodd}
\end{center}
\end{figure}

The upper bound  of $f(n,n)\le 3$ of Theorem~\ref{thm:k=n}
follows directly from Lemma~\ref{lem:conf3}.
Now we prove in Lemma~\ref{lem:few_0edges} and
Theorem~\ref{thm:2convex}, that if $G$ is a convex
geometric graph, at least three edges
are needed to forbid some tree on $n$
vertices.

\begin{lemma} \label{lem:few_0edges}
Let $T$ be a tree on $n$ vertices. If $G$ is
a convex geometric graph, then $T$ can be 
embedded into $G$ using less
than $\frac{n}{2}$ convex hull edges of $G$.
\end{lemma}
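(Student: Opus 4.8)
The plan is to induct on $n$, mimicking the structure of the embedding algorithm described above, but being careful about how many convex hull edges of $G$ we are forced to use. Recall that $G$ is a convex geometric graph on $n$ points; its convex hull edges are exactly the $n$ edges of the convex polygon, i.e. the edges joining cyclically consecutive points. The key structural observation I would use is that a convex hull edge of $G$ can only appear in a planar embedding of $T$ as an edge between two points that are consecutive on the hull, so the ``dangerous'' edges are very localized.

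First I would set up the induction. Pick a leaf $\ell$ of $T$ with neighbor $w$, so $T' = T - \ell$ is a tree on $n-1$ vertices; alternatively, root $T$ and split off a subtree as in the algorithm. The natural approach is: embed $T$ by the rooted algorithm, choosing the root $v$ of $T$ to be embedded at a convex hull point $p$ of $G$, and then recursing into the angular wedges $S_1, \dots, S_m$. The crucial point is that each $S_i$ is again a point set in convex position, and the recursion embeds the subtree rooted at $u_i$ into $S_i$. The only edges of the embedding that are candidates to be convex hull edges of $G$ are (a) edges between $p$ and an extreme point of some $S_i$, and (b) edges that are hull edges produced inside the recursion on the $S_i$'s. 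I would bound (b) by induction applied to each $S_i$ with its subtree, and bound (a) by controlling how $v$ is joined to the $u_i$.

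The heart of the argument is the bookkeeping. If the subtree rooted at $u_i$ has $n_i$ vertices, induction gives an embedding into $S_i$ using fewer than $n_i/2$ hull edges of the convex set $S_i$. But a hull edge of $S_i$ need not be a hull edge of $G$: only the two ``outermost'' edges of the convex polygon on $S_i$ (the ones on the boundary arc of $G$'s hull) can be hull edges of $G$, and even those are hull edges of $G$ only when $S_i$ occupies a full contiguous arc with no gaps — which, by the wedge construction, it essentially does on one side. So from each $S_i$ we inherit at most a bounded number (I expect at most one, by choosing the recursion's root away from the relevant corner) of genuine hull edges of $G$, plus the edge from $p$ to $u_i$ is a hull edge of $G$ only for at most the two wedges adjacent to $p$. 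Summing, the total count of hull edges of $G$ used is at most $\sum_i (\text{contribution from } S_i) + O(1)$, and I would arrange the constants so this stays below $n/2$; the slack comes from the fact that we strictly save on the $\lfloor \cdot \rfloor$'s and that the apex $p$ contributes a hull edge for at most two children.

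The main obstacle I anticipate is precisely this last step: making the inductive inequality close cleanly, i.e. ensuring that the $O(1)$ overhead from the apex $p$ and from re-promoting $S_i$-hull-edges to $G$-hull-edges does not accumulate across the recursion and overflow the $n/2$ budget. The way to handle it is to prove a slightly stronger statement by induction — for instance, that $T$ can be embedded into any convex $G$ on $n$ points using at most $\lfloor (n-1)/2 \rfloor$ hull edges, with the additional guarantee that the point realizing a prescribed hull vertex of $G$ (the future ``attachment point'' $p$) is incident to at most one used hull edge — so that when this embedded block is plugged into a wedge of a larger instance, it contributes in a controlled way. With that strengthening the sum telescopes and the bound $< n/2$ follows; the base cases $n=2,3$ are immediate since a single edge or a path on three convex points uses at most one hull edge.
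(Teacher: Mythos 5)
Your induction does not close, and the reason is a structural claim that is backwards. When $G$ is in convex position and you split around a hull point $p$, each wedge $S_i$ is a \emph{contiguous arc} of the polygon; consequently the convex hull of $S_i$ consists of all the edges between consecutive points of that arc --- every one of which \emph{is} a convex hull edge of $G$ --- plus a single closing chord, which is the only hull edge of $S_i$ that is not a hull edge of $G$. So the recursion inherits essentially \emph{all} of the sub-instance's hull edges, not ``at most one,'' and the naive sum is $\sum_i n_i/2 + O(1) = (n-1)/2 + O(1)$, which overflows. Your proposed strengthening (at most $\lfloor (n-1)/2\rfloor$ hull edges, with a controlled attachment point) is false precisely at the small cases the recursion must pass through: a subtree that is a single edge is embedded into a two-point arc whose unique edge is forced to be a hull edge of $G$ (so it uses $1 > \lfloor 1/2\rfloor$), and a star on $4$ points in convex position always uses exactly $2$ hull edges while $\lfloor 3/2\rfloor = 1$. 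No choice of ``recursion root away from the corner'' repairs this, because leaves attached along the arc unavoidably consume hull edges at a rate of up to one per two vertices.

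The missing idea is to localize the hull edges at the leaves and then count globally, rather than to induct. The paper's proof runs the embedding algorithm once and observes that whenever a child $v$ of $u$ roots a subtree with at least two nodes, there are two choices of arc endpoint at which to place $v$, and at least one of them makes $(u,v)$ a chord rather than a hull edge; hence one may assume every used hull edge is incident to a leaf of $T$. Each non-root vertex is then incident to at most one used hull edge, so $n/2$ used hull edges would force $n$ even and a perfect pairing of each vertex with a private leaf child. Rooting $T$ at a vertex of degree at least $3$ whose first child is not a leaf (possible since $T$ is neither a star nor a path, those cases being handled directly with two hull edges each), one reorders the root's children so that its unique leaf child is no longer placed at a hull neighbor, destroying the pairing and giving strictly fewer than $n/2$ hull edges. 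If you want to salvage your write-up, this leaf-localization step plus the parity/pairing count is what you need to add; the wedge recursion by itself cannot yield the bound.
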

\begin{proof} 
If $T$ is a star,
then any embedding of $T$ into $G$
uses only two convex hull edges.
If $T$ is a path then it can be embedded
into $G$ using at most two convex hull edges.
Therefore, we may assume that $T$ is neither a star nor a path.

Since $T$ is not a path, it has a
vertex of degree at least three.
Choose this vertex as the root.
Since $T$ is not 
a star, the root has a child whose subtree
has at least two nodes. Sort the 
children of $T$ so that this node is
first. Embed $T$ into $G$ with
the embedding algorithm. 

Let $u$ and
$v$ be vertices of $T$, so that
$u$ is the parent of $v$. Suppose
that the subtree rooted at $v$ has at least
two nodes. Then in the embedding
algorithm we have at least two choices to embed $v$ 
once the ordering of the children of $u$ has been chosen.
At least one of which is such that $(u,v)$ 
is not embedded into a convex hull edge.
Therefore, we may assume that the embedding is such that all the 
convex hull edges used are incident to a leaf.

Since the first child of the root is not a leaf,
there is at most one convex hull edge incident
to the root in the embedding. Note that any 
vertex of $T$, other than the root,
is incident to at most one convex hull edge 
in the embedding. If $n/2$ or 
more convex hull edges are used, then there are at least 
$n/2$ non-leaf vertices, each adjacent to a leaf. These
vertices must be all the vertices in $T$ and there
are only $n/2$ such pairs ($n$ must also be even). 
Therefore every non-leaf vertex has at most one child 
which is a leaf. In particular 
the root has at most one child which is a leaf.
Since the root was chosen of degree at least three it has a child which
is not a leaf nor the first child; we place this
vertex last in the ordering of the children of the root.
The leaf adjacent to the root can no longer be a
convex hull edge and the embedding uses less
than $n/2$ convex hull edges.
\end{proof}

\begin{theorem}\label{thm:2convex}
If $G$ is a convex geometric graph and has at most two 
forbidden edges, then any tree on $n$ vertices can
be embedded into $G$, without using a forbidden
edge.
\end{theorem}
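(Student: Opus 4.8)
The plan is to mimic the structure of the one-forbidden-edge theorem, but now using the stronger combinatorial tool provided by Lemma~\ref{lem:few_0edges}: on a convex point set, any tree on $n$ vertices can be embedded using fewer than $n/2$ convex hull edges. Since $G$ is convex, it has exactly $n$ convex hull edges and $\binom{n}{2}-n$ ``chord'' edges, and the two forbidden edges $e_1,e_2$ fall into one of three types according to how many of them are hull edges. The overall strategy is: first reduce to a clean situation by choosing a good root and good child-orderings so that the only convex hull edges used by the embedding algorithm are incident to leaves; then argue that with at most two forbidden edges we always have enough slack to reroute.

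First I would dispose of the case where $T$ is a star or a path, exactly as in Lemma~\ref{lem:few_0edges}: a star uses only two hull edges in any embedding, and with a little care one places the star-center at a vertex none of whose two incident hull edges is forbidden (possible since there are $n\ge 5$ hull edges and only two are forbidden); a path can be embedded avoiding any two prescribed edges by standard convex-position arguments. So assume $T$ has a vertex of degree $\ge 3$; root $T$ there and order children so that a non-leaf child comes first, as in Lemma~\ref{lem:few_0edges}. Running the embedding algorithm and applying the rerouting observation from that lemma (whenever a vertex $v$ has a subtree of size $\ge 2$, there are at least two choices for the image of $v$, at least one of which does not use a hull edge), we may assume every hull edge used is incident to a leaf, and in fact — using the final paragraph of Lemma~\ref{lem:few_0edges} — that strictly fewer than $n/2$ hull edges are used. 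Now handle the two forbidden edges one at a time, reusing the case analysis from the $f(n,n)\ge 2$ theorem: if a forbidden edge $e_i$ is used and its ``lower'' endpoint is the image of a vertex whose subtree has $\ge 2$ nodes, reroute that subtree; if $e_i$ is incident to a leaf with a non-leaf sibling, reorder; if the offending vertex sits in a star-subtree, relocate the star center; and in the residual spider-tree configurations use the even/odd parity trick of Figure~\ref{fig:evenodd}. The point is that each such local fix touches only the points in one recursive subtree, so a fix removing $e_1$ can be chosen not to reintroduce $e_2$, provided $e_2$ was not already present — and if both are present simultaneously we fix the one whose subtree is ``deeper'' first.

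The main obstacle will be the bookkeeping when both forbidden edges are simultaneously used by the embedding and they interact — for instance when $e_1$ and $e_2$ are incident to leaves that are siblings of one another, or when they both land inside the same small spider-subtree where the parity argument has no room to maneuver (the genuinely tight small cases, reminiscent of why $f(n,n)$ can equal $3$ in the non-convex setting but not here). The resolution I expect: by the quantitative bound from Lemma~\ref{lem:few_0edges}, strictly fewer than $n/2$ hull edges are used, so the pigeonhole counting that forces the extremal ``every non-leaf has exactly one leaf child'' configuration in that lemma fails here, giving at least one extra free non-leaf vertex; placing that vertex last in its sibling order frees up an additional hull slot, and one shows that with two such free slots (one inherited from the root of degree $\ge 3$, one from the surplus) both $e_1$ and $e_2$ can be simultaneously dodged. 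The remaining finitely many tight spider configurations on few vertices are checked directly, as in the $k=n$ lower-bound proof, completing the argument.
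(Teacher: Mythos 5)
Your proposal has a genuine gap at precisely the point you flag yourself: the interaction between the two forbidden edges under local rerouting is never resolved. The sentences ``one shows that with two such free slots \dots both $e_1$ and $e_2$ can be simultaneously dodged'' and ``the remaining finitely many tight spider configurations \dots are checked directly'' contain the entire difficulty of the theorem and are asserted rather than proved. The local fixes in the one-forbidden-edge theorem are not independent of one another: the reorderings of siblings and the even/odd parity trick relocate the images of entire subtrees, so a fix that removes $e_1$ can move some other tree edge onto $e_2$ and vice versa, and your priority rule (``fix the deeper subtree first'') gives no argument that this process terminates rather than cycling. In addition, your accounting in terms of ``free hull slots'' only speaks to forbidden edges that are convex hull edges, whereas the theorem allows the two forbidden edges to be arbitrary chords; for those, Lemma~\ref{lem:few_0edges} as you use it gives no leverage.

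The paper's proof takes a different route that your proposal misses entirely: an averaging argument over rotations. It takes the embedding $f_0$ of Lemma~\ref{lem:few_0edges} and, exploiting convexity, considers all $n$ cyclic rotations of it. Each tree edge is embedded as an $s$-edge for some $s$ that is invariant under rotation, so it can coincide with a given forbidden edge in at most one rotation. If the two forbidden edges are an $l$-edge and an $r$-edge with $l\neq r$, blocking every rotation would require $n$ distinct tree edges, contradicting $|E(T)|=n-1$. If $l=r$, each tree edge blocks at most two rotations, forcing at least $n/2$ tree edges to be embedded as $l$-edges; the ``fewer than $n/2$ hull edges'' bound of Lemma~\ref{lem:few_0edges} is used exactly here to rule out $l=0$, and for $l>0$ the paper argues that $n/2$ pairwise noncrossing $l$-edges cannot all occur in a planar embedding. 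So the quantitative content of Lemma~\ref{lem:few_0edges} enters as a counting ingredient, not as a supply of spare hull slots; you should replace your case analysis with this rotation argument.
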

\begin{proof}
Let $f_0$ be an embedding
given by Lemma \ref{lem:few_0edges},
of $T$ into $G$. For $0\le i \le n$, let $f_i$ be the embedding
produced by rotating $f_0$, $i$ places
to the right. Assume that in each
of these rotations at least one forbidden
edge is used, as otherwise we are done.
Let $e_1,\dots, e_m$ be the edges
of $T$ that are mapped to a forbidden
edge in some rotation. Assume that the two forbidden edges are an
$l$-edge and an $r$-edge respectively.

Suppose that $l\neq r$.  
Then, each edge of $T$ can be 
embedded into a forbidden edge at most once in
all of the $n$ rotations. Thus $m\ge n$. This
is a contradiction, since $T$ has $n-1$ edges.

Suppose that $l=r$.
Then, each of the $e_i$ is mapped
twice to a forbidden edge. Thus $m \ge n/2$.
By Lemma \ref{lem:few_0edges},
$f_0$ uses less than $n/2$ convex hull edges. Therefore, $l$ 
and $r$ must be greater than $0$. But
a set of $n/2$ or more $r$-edges, with $r>0$, must contain a pair of
edges that cross.
And we are done, since $f_0$ is an embedding.
\end{proof}

\section{Bounds on $f(n,k)$}

In this section we prove Theorem~1. First we show the upper bound.

\begin{lemma}\label{lem:conf3}
If $G$ is a convex geometric graph, 
then forbidding three consecutive convex hull edges of $G$
forbids the embedding of $T_n$.
\end{lemma}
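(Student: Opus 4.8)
The plan is to show that any embedding of $T_n$ into a convex geometric graph $G$ on $n$ points must use one of three prescribed consecutive convex hull edges. Label the vertices of $G$ as $p_1, \dots, p_n$ in convex position (in cyclic order), and let the three forbidden edges be $p_1p_2$, $p_2p_3$, $p_3p_4$. Since $G$ has exactly $n$ convex hull edges (the edges $p_ip_{i+1}$ of the boundary), forbidding these three leaves $n-3$ convex hull edges available, plus all the ``diagonals.'' I want to argue that $T_n$ cannot be realized as a planar (i.e., non-crossing) spanning subtree of $G$ using only these allowed edges.

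The key structural fact I would use is that a non-crossing spanning tree on $n$ points in convex position, viewed inside the convex polygon, must contain \emph{at least two} convex hull edges. (Indeed, the edges of a non-crossing spanning tree together with the hull boundary triangulate... or more carefully: consider the cyclic sequence of hull vertices; a non-crossing spanning connected subgraph on points in convex position that uses at most one boundary edge would force some ``ear'' vertex $p_i$ whose only tree-edges are diagonals jumping over it, contradicting planarity or connectivity. This is essentially the standard fact that every non-crossing spanning tree has at least two leaves that are joined by hull edges, or that the ``outer face'' structure forces hull edges.) So in fact $T_n$, being a spider tree with many leaves, will use several hull edges, but the real question is whether it can \emph{avoid} the three consecutive ones $p_1p_2, p_2p_3, p_3p_4$.

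The heart of the argument concerns the vertex $p_3$: its only incident convex hull edges are $p_2p_3$ and $p_3p_4$, both forbidden, so every tree-edge at $p_3$ must be a diagonal. I would show that in a non-crossing spanning tree, if a vertex $p_i$ uses no hull edge at all, then $p_i$ is ``covered'' by a diagonal of the tree separating it from the rest, and this forces $p_i$ to have degree exactly... — more precisely, I would analyze the planar structure: the diagonal tree-edges incident to $p_3$ partition the other vertices, and because $T_n$ is a spider (a subdivided star, with all internal vertices of degree $2$ except one center), the degree constraints at $p_3$ and at its neighbors become very restrictive. The main obstacle, and where I expect the real work to be, is translating ``$T_n$ is a spider with prescribed degree sequence'' into a contradiction with ``$p_3$ uses only diagonals and the embedding is non-crossing'' — I would handle this by a case analysis on $\deg_{T_n}(p_3) \in \{1, 2, d\}$ (leaf, subdivision vertex, center) and in each case use the non-crossing condition together with the fact that $p_2$ and $p_4$ are then cut off into small pockets that cannot accommodate the required subtree sizes of $T_n$; the parity/size structure of $T_n$ built into its definition (every ``leg'' has length $2$) is exactly what makes the pockets the wrong size.
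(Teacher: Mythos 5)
Your overall strategy -- exploit the degree structure of the spider $T_n$ together with the fact that $p_2$ and $p_3$ have both of their incident hull edges forbidden -- is the right one, and it is essentially the strategy of the paper. But the proposal stops exactly where the proof has to happen: you write that ``the real work'' is to turn the non-crossing condition plus the degree sequence of $T_n$ into a contradiction, and you never supply that work. The missing idea is a single clean observation: \emph{in any planar embedding of $T_n$ into a convex point set, every edge incident to a leaf must be embedded into a convex hull edge.} The reason is that every neighbor of a leaf in $T_n$ has degree $2$. If a leaf $\ell$ sits at a point $p$ and its unique tree edge goes to a non-adjacent point $q$ on the hull, then the chord $pq$ splits the remaining points into two nonempty arcs; since $\ell$ is a leaf and no edge of the spanning tree may cross $pq$, the two arcs can only be joined to each other through $q$, forcing $q$ to have tree-degree at least $3$ -- impossible for a subdivision vertex. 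With this in hand the lemma falls out: no leaf and no neighbor of a leaf can be placed at $p_2$ or at $p_3$ (each of their two hull edges is forbidden), and the only vertices of $T_n$ that are neither leaves nor neighbors of leaves are the center $v$ (and, for even $n$, one further vertex adjacent to $v$), so one of $p_2,p_3$ must receive a leaf or a leaf-neighbor, or else $v$ and its neighbor occupy $p_2$ and $p_3$ and the edge between them maps to the forbidden edge $p_2p_3$.

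Two further points. First, your ``key structural fact'' that every non-crossing spanning tree of a convex point set uses at least two hull edges is true but irrelevant here; it says nothing about \emph{which} hull edges are used, so it cannot rule out the three forbidden ones. Second, analyzing only the vertex placed at $p_3$ cannot succeed: the center $v$ may legitimately sit at $p_3$ using only diagonals, with no contradiction arising from $p_3$ alone. You must use $p_2$ and $p_3$ \emph{together}, since $T_n$ has only one (or, for even $n$, two adjacent) vertices capable of occupying such a doubly-blocked point. As written, the proposal is a plausible plan with the decisive lemma and the two-point counting argument both absent, so I would count it as having a genuine gap.
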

\begin{proof}
Recall that $T_n$ comes from subdividing a star, let
$v$ be the non leaf vertex of this star.
Let $(p_1,p_2),(p_2,p_3),(p_3,p_4)$ be the forbidden
edges, in clockwise order around the convex hull
of $G$.
Note that in any embedding of $T_n$ into $G$, an
edge incident to a leaf of $T_n$, must be embedded
into a convex hull edge. Thus, the leaves of $T_n$ nor its neighbors
can be embedded into $p_2$ or $p_3$, without using a forbidden edge.
Thus, $v$ must be embedded into $p_2$ or $p_3$. Without
loss of generality assume that $v$ is embedded into $p_2$.
But then, the embedding must use $(p_2,p_3)$ or $(p_3,p_4)$
\end{proof}

\begin{lemma}\label{lem:conf2_2}
If $G$ is a convex geometric graph,
then forbidding any three pairs of consecutive convex hull edges of $G$
forbids the embedding of $T_n$.
\end{lemma}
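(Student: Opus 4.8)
The plan is to reuse the structural fact used in the proof of Lemma~\ref{lem:conf3}: in any embedding of $T_n$ into $G$, every edge of $T_n$ incident to a leaf is mapped to a convex hull edge of $G$ (otherwise the neighbor of that leaf would separate the point set into several nonempty parts and hence have degree at least three, forcing it to be the center of $T_n$, which is never adjacent to a leaf). The consequence I want to extract is that a hull vertex $p$ of $G$ both of whose incident hull edges are forbidden is very restricted: in an embedding avoiding all forbidden edges, $p$ cannot host a leaf of $T_n$, since the leaf's unique edge would be a forbidden hull edge at $p$; and it cannot host a vertex of $T_n$ adjacent to a leaf, since the corresponding leaf-edge would again be a forbidden hull edge at $p$. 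So such a $p$ must host a vertex of $T_n$ that is neither a leaf nor adjacent to a leaf.

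Next I would count such vertices in $T_n$. When $n$ is odd, $T_n$ is a subdivided star and every vertex other than the center is a leaf or the neighbor of a leaf, so exactly one vertex is neither; when $n$ is even, $T_n$ arises by subdividing one further edge, which creates exactly one additional such vertex, so there are exactly two. In all cases $T_n$ has at most two vertices that are neither leaves nor neighbors of leaves (for instance $T_5$ is a path with a single degree-two center, and $T_6$ is a path with two interior non-leaf-adjacent vertices).

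Finally, I would observe that a pair of consecutive convex hull edges shares exactly one hull vertex and is determined by it, so three distinct pairs of consecutive hull edges have three distinct shared vertices $p,q,r$, and forbidding these three pairs forbids both hull edges incident to each of $p$, $q$ and $r$. Suppose, for contradiction, that $T_n$ admits an embedding into $G$ using no forbidden edge. By the first paragraph, each of $p$, $q$, $r$ hosts a vertex of $T_n$ that is neither a leaf nor adjacent to a leaf; since $p$, $q$, $r$ are distinct, this produces three distinct such vertices, contradicting the bound of two from the second paragraph. Hence $T_n$ cannot be embedded into $G$ without using a forbidden edge, which is the assertion of the lemma. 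The argument is short once the leaf-edge fact is in hand; the only point requiring care is the vertex count when $n$ is even, and it is precisely this extra vertex that forces three pairs to be forbidden rather than two.
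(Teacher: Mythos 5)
Your proof is correct and follows essentially the same route as the paper's: the paper likewise observes that leaf edges of $T_n$ must land on convex hull edges, that the three middle vertices of the forbidden pairs can therefore host only vertices of $T_n$ that are neither leaves nor neighbors of leaves, and that $T_n$ has at most two such vertices. Your write-up merely supplies the justifications (the degree-two argument for why leaf edges map to hull edges, and the explicit count for odd and even $n$) that the paper leaves implicit.
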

\begin{proof}
Let $p_1, p_2$ and $p_3$ be the vertices in the middle
of the three pairs of consecutive forbidden edges of $G$.
Note that a leaf of $T_n$, nor its neighbor can be
embedded into $p_1$, $p_2$ or $p_3$, without
using a forbidden edge. But at most two points
do not fall into this category.
\end{proof}

\begin{lemma}
$ f(n,k) \le 2 \frac{n(n-2)}{k-2}$
\end{lemma}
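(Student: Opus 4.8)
The plan is to exhibit an explicit convex geometric graph $G$ on $n$ points together with a set $F$ of at most $2n(n-2)/(k-2)$ forbidden edges, and a tree $T$ on $k$ vertices, such that every embedding of $T$ into $G$ uses an edge of $F$. The natural candidate for $T$ is the spider tree $T_k$ from Section~2, since we already have two lemmas (Lemma~\ref{lem:conf3} and Lemma~\ref{lem:conf2_2}) describing small configurations of convex hull edges that block $T_k$ in a convex geometric graph. The strategy is to take $G$ convex and to partition its convex hull boundary into roughly $n/(k-1)$ contiguous arcs, each of size about $k-1$, and to install a blocking configuration inside each arc so that $T_k$ cannot be embedded using only the points of a single arc; then argue that any embedding of $T_k$ must in fact lie within one arc, forcing the use of a forbidden edge.

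The key steps, in order, are as follows. First, choose $G$ to be in convex position and split the $n$ points cyclically into $t \approx n/(k-1)$ blocks $B_1,\dots,B_t$, each a contiguous arc of the convex hull. Second, within each block place a blocking gadget as in Lemma~\ref{lem:conf3} (three consecutive convex hull edges) or Lemma~\ref{lem:conf2_2} (three pairs of consecutive convex hull edges); the point is that each gadget forbids $T_k$ from being embedded entirely within that block, at a cost of at most a constant number of edges per block. Third — and this is the crucial separation argument — I would show that \emph{any} embedding of $T_k$ into $G$ that avoids all the ``separator'' edges between consecutive blocks is confined to a single block, because $T_k$ is connected and a planar (non-crossing) embedding of a connected graph in convex position, once it is forced to stay on one side of a chord, stays within that arc. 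Combining this with the per-block gadgets yields that no embedding of $T_k$ avoids $F$. Finally, count: there are $O(n/k)$ blocks, each contributing $O(1)$ gadget edges plus the separator edges between blocks, and one checks that the total is at most $2n(n-2)/(k-2)$ after absorbing constants — here the extra factor in $n(n-2)$ versus $n^2$ gives the slack needed to cover the separator edges and boundary effects when $k-1$ does not divide $n$.

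The main obstacle I anticipate is the separation step: forbidding the ``bridge'' edges between blocks is not by itself enough, because an embedding of $T_k$ could still use points from several blocks as long as it never directly connects two blocks by a forbidden edge — the path through $T_k$ linking two blocks could route through a third block. The fix is to be careful about \emph{which} edges are forbidden: rather than only the single convex hull edge joining consecutive blocks, one forbids enough edges crossing each block boundary (or uses the global structure of $r$-edges, as in Theorem~\ref{thm:2convex}, that any large family of crossing-free long edges is impossible) so that a connected non-crossing subgraph spanning two blocks is genuinely impossible without a forbidden edge. Getting this bookkeeping tight — so that the per-boundary cost times the number of boundaries still fits under $2n(n-2)/(k-2)$ — is where the real work lies; everything else is either a direct appeal to Lemma~\ref{lem:conf3}/Lemma~\ref{lem:conf2_2} or a routine arithmetic estimate, and I would not grind through the latter in detail.
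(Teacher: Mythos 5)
Your construction breaks down at the separation step, and the gap is not a matter of bookkeeping: it cannot be repaired within the stated budget. To confine every non-crossing embedding of $T_k$ to a single block you would have to forbid \emph{every} edge joining two distinct blocks, because any single surviving chord $pq$ with $p\in B_i$, $q\in B_j$ extends to a non-crossing connected subgraph straddling the boundary (the chord together with the convex hull paths of the two arcs is a planar spanning tree of $B_i\cup B_j$); a tree needs only one inter-block edge, so no ``separator'' set short of all of them suffices, and the crossing-family argument of Theorem~\ref{thm:2convex} gives nothing here since a single edge cannot cross itself. Forbidding all inter-block edges costs $\binom{n}{2}-t\binom{k-1}{2}=\Theta(n^2)$ edges when $k=o(n)$, far above $2n(n-2)/(k-2)$. (There is also a smaller inconsistency: a block of about $k-1$ points cannot contain $T_k$ at all, so as stated the per-block gadgets do no work.)

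The paper's proof sidesteps confinement entirely. Take $G$ convex and forbid every $r$-edge for $r=0,\dots,\left\lceil 2\tfrac{n-2}{k-2}-2\right\rceil$; a convex $n$-point set has at most $n$ $r$-edges per level, so this costs at most $2n(n-2)/(k-2)$ edges. One then shows that \emph{every} $k$-point subset $H$ --- not just the blocks of a fixed partition --- contains, among the edges of its own convex hull, either three consecutive forbidden edges or three pairs of consecutive forbidden edges: otherwise at least about $(k-2)/2$ hull edges of $H$ would each have more than $\left\lceil 2\tfrac{n-2}{k-2}-2\right\rceil$ points of $G$ beyond them, exceeding the $n-k$ points available outside $H$. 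Lemmas~\ref{lem:conf3} and~\ref{lem:conf2_2} then block $T_k$ inside $H$. Your instinct to use the spider tree and those two lemmas was right; the missing idea is to make a blocking configuration appear in every $k$-subset via the $r$-edge levels, rather than to trap the embedding in a prescribed region.
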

\begin{proof}
Let $G$ be a complete convex geometric graph. 
We forbid every $r$-edge of $G$ for 
$r=0,\dots,\left \lceil 2\frac{n-2}{k-2}-2 \right \rceil$.
Note that, in total we are forbidding at most
$n \left ( \left \lceil 2\frac{n-2}{k-2}-2 \right \rceil+1 \right )\le 2\frac{n(n-2)}{k-2}$ edges.
As every subset of points of $G$ is 
in convex position, it suffices to show that every
induced subgraph $H$ of $G$ on $k$ 
vertices is in one of the two configurations 
of Lemma \ref{lem:conf3} and \ref{lem:conf2_2}.

Assume then, that $H$ does not contain three consecutive
forbidden edges in its convex hull nor three pairs of consecutive
forbidden edges in its convex hull. $H$ has at most
two (non-adjacent) pairs of consecutive forbidden
edges in its convex hull. Therefore every forbidden 
edge of $H$ in its convex hull---with the exception of at most two---must be preceded
by an $\ell$-edge (of $G$), with $\ell>\left \lceil 2\frac{n-2}{k-2}-2 \right \rceil$. $H$
contains at least $\frac{k-2}{2}$ of these edges. 
The points separated by these edges amount to more than 
 $\frac{k-2}{2}\left \lceil 2\frac{n-2}{k-2}-2 \right \rceil\ge n-k$ points of $G$. This
is a contradiction, since together with the $k$ points
of $H$ this is strictly more than $n$.

\end{proof}

Now, we show the lower bound of Theorem \ref{thm:main}.


\begin{lemma}
$f(n,k)\ge \left( \frac{1}{2} \right )\frac{n^2}{k-1}-\frac{n}{2}$
\end{lemma}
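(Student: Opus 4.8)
The plan is to exhibit a single complete geometric graph $G$ on $n$ points together with a small forbidden set $F$, and to argue that for a suitable tree $T$ on $k$ vertices every embedding of $T$ into $G$ must use an edge of $F$. The natural candidate for $G$ is the point set in convex position, since then every $k$-subset of the points is again in convex position, and the combinatorial structure of planar (i.e.\ non-crossing) subtrees on a convex point set is rigid: a non-crossing spanning tree of a convex polygon triangulates into ``ears,'' and an edge joining a leaf of $T$ to its neighbour must be drawn as a boundary (convex hull) edge of the sub-polygon spanned by the image. I would reuse exactly the mechanism of Lemmas~\ref{lem:conf3} and~\ref{lem:conf2_2}: if we forbid a short block of consecutive convex hull edges around some vertex, then no leaf of $T_n$ and no neighbour of a leaf can be placed at that vertex.

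First I would fix $G$ to be the convex complete geometric graph on $n$ points and partition the $n$ boundary edges into roughly $n/m$ blocks of $m$ consecutive edges each, where $m$ is chosen so that a block forbids ``locally'' the way three consecutive edges do in Lemma~\ref{lem:conf3}. Concretely, forbidding a block of $m$ consecutive convex hull edges kills roughly $m-1$ consecutive vertices as possible images of ``$v$'' (the center of the subdivided star $T_k$), so after forbidding about $n/m$ such blocks, the number of vertices that can still host $v$ is at most $n - (n/m)(m-2) = 2n/m$. For this to force $T_k$ to be unembeddable we need the available vertices to number fewer than what $T_k$ needs for its spider structure — essentially fewer than $(k-1)/2$ legal positions — so we want $2n/m < (k-1)/2$ roughly, i.e.\ $m$ of order $n/k$, which after counting the forbidden edges ($\approx m \cdot n/m = n$ per... ) — here I would instead balance: with blocks of size $m$ we forbid about $(n/m)\cdot m$... so the count has to be done carefully, choosing $m \approx 2n/(k-1)$, giving total forbidden edges of order $n^2/(k-1)$, and then tightening the constant to land on $\tfrac12 n^2/(k-1) - n/2$.

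The cleanest route, and the one I would actually write, reverses the quantifiers: suppose $|F| < \tfrac12 n^2/(k-1) - \tfrac{n}{2}$; I must then embed \emph{every} tree $T$ on $k$ vertices avoiding $F$. Use the embedding algorithm of Section~2 rooted appropriately, and use the averaging/rotation idea from Theorem~\ref{thm:2convex}: for a fixed ``base'' embedding $f_0$ of $T$ into a convex $k$-subset placed around $G$, the $n$ rotations $f_0,\dots,f_{n-1}$ each use $k-1$ edges, so the total edge-usage is $n(k-1)$; since each edge of $G$ is a convex hull edge of at most (some bounded number of) rotated copies, if $|F|$ is below the stated threshold then by pigeonhole some rotation avoids $F$. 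The arithmetic is: a single edge of $G$ can be ``hit'' by the rotations at most $2(k-1)$ times or so (an edge at distance $d$ around the hull is used only when the $k$ chosen points straddle it suitably), and matching $n(k-1)$ usages against $|F|$ forbidden edges each absorbing at most that many usages yields exactly $|F| \ge \tfrac{n(k-1)}{2(k-1)} \cdot(\text{something}) $ — the bookkeeping is what produces the $\tfrac12 n^2/(k-1) - n/2$.

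The main obstacle is the counting in that last step: controlling, for a \emph{worst-case} tree $T$ and a \emph{worst-case} adversarial placement of $F$, how many of the $n$ rotations of a good base embedding a single forbidden edge can destroy. A naive bound (an edge kills at most two rotations, as in the $l\neq r$ case of Theorem~\ref{thm:2convex}) only handles convex hull edges; interior edges of $G$ can be reused across many rotations, so I expect to need the sharper observation that an $r$-edge of $G$ is an edge of the convex hull of a $k$-subset of the points only when that subset is confined to one side, which happens for at most $\binom{r}{k-2} + \binom{n-2-r}{k-2}$ subsets out of $\binom{n}{k}$ — turning the statement into a genuinely quantitative convex-position estimate. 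Packaging that estimate so the constants come out to exactly $\tfrac12$ and $-\tfrac{n}{2}$, rather than merely $\Theta(n^2/k)$, will be the delicate part.
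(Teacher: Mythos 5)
There is a genuine gap here, and in fact the first half of your proposal attacks the wrong inequality. Since $f(n,k)$ is defined as a minimum over all complete geometric graphs and over all forbidding sets, exhibiting one convex point set together with a small set $F$ that forbids some tree proves an \emph{upper} bound on $f(n,k)$; for the lower bound you must show that \emph{every} set $F$ that forbids some $k$-vertex tree, in \emph{every} complete geometric graph $G$ on $n$ points, satisfies $|F|\ge \frac{n^2}{2(k-1)}-\frac{n}{2}$. Your third paragraph does reverse the quantifiers correctly, but the rotation/averaging machinery you then invoke is tied to convex position (rotating an embedding only makes sense on a convex point set), so it cannot handle an arbitrary $G$; and even in the convex case you leave the decisive counting step --- how many of the $n$ rotations a single forbidden interior edge can destroy --- explicitly unfinished, conceding yourself that the constants are not under control. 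As written, the proposal does not yield the claimed bound.

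The paper's proof is much shorter and uses no geometry beyond general position. If $F$ forbids some tree on $k$ vertices, then the abstract graph $H:=G\setminus F$ cannot contain $K_k$ as a subgraph: any $k$ vertices of $G$ inducing a complete subgraph of $H$ are $k$ points in general position, hence span a complete geometric graph with no forbidden edges, and such a graph contains every tree on $k$ vertices as a planar subgraph \cite{optimal}. Tur\'an's theorem then gives $|E(H)|\le \left(\frac{k-2}{k-1}\right)\frac{n^2}{2}$, whence $|F|\ge \binom{n}{2}-\left(\frac{k-2}{k-1}\right)\frac{n^2}{2}=\frac{n^2}{2(k-1)}-\frac{n}{2}$. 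The structural observation you were missing is precisely this reduction: ``$F$ forbids some $k$-tree'' forces the remainder to be $K_k$-free, which is a far stronger and cleaner consequence than anything the rotation argument extracts.
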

\begin{proof}
Let $F$ be a set of edges whose removal from $G$ forbids
some $k$-tree. Let $H:=G\setminus F$. Note that $H$
contains no complete $K_k$ as a subgraph, otherwise
any $k$-tree can be embedded in this subgraph \cite{optimal}. 
By Tur{\'a}n's Theorem \cite{turan}, $H$ cannot contain 
more than $\left (\frac{k-2}{k-1} \right )\frac{n^2}{2}$ edges.
Thus $F$ must have size at least 
$\left( \frac{1}{2} \right )\frac{n^2}{k-1}-\frac{n}{2}$.
\end{proof}

\section*{Acknowledgments}
{\small Part of this work was done at the ``First Workshop in Combinatorial
Optimization at Cinvestav''. It was continued during 
a visit of L.F. Barba, R. Fabila-Monroy, J. Lea\~nos and
G. Salazar to Abacus research center\footnote{ABACUS, CONACyT grant EDOMEX-2011-C01-165873}. }



\small 
\bibliographystyle{abbrv}

\bibliography{erdos-sos}



\end{document}